\newtheorem{theorem}{Theorem}[section]
\newtheorem{remark}{Remark}[section]
\newtheorem{corollary}{Corollary}[section]
\newtheorem{proposition}{Proposition}[section]
\numberwithin{equation}{section}
\begin{document}
	
\title{On The Operator Hermite--Hadamard Inequality}
\author{Hamid Reza Moradi, Mohammad Sababheh and Shigeru Furuichi}
\subjclass[2010]{Primary 47A63, 52A41, Secondary 47A30, 47A60, 52A40.}
\keywords{Hermite--Hadamard inequality, Mond--Pe\v cari\'c method, self adjoint operator, convex function.}

\begin{abstract}
The main target of this paper is to discuss operator Hermite--Hadamard inequality for convex functions, without appealing to  operator convexity.  Several forms of this inequality will be presented and some applications including norm and mean inequalities will be shown too.
\end{abstract}
\maketitle
\pagestyle{myheadings}
\markboth{\centerline {On The Operator Hermite--Hadamard Inequality}}
{\centerline {Hamid Reza Moradi, Mohammad Sababheh and Shigeru Furuichi}}
\bigskip
\bigskip
\section{Introduction and preliminaries}
Let $\mathcal{B}\left( \mathcal{H} \right)$ be the $C^*$--algebra of all bounded linear operators on a Hilbert space $\mathcal{H}$.  As usual, we reserve $m$, $M$ for scalars and ${{\mathbf{1}}_{\mathcal{H}}}$ for the identity operator on $\mathcal{H}$. A self adjoint operator $A$ is said to be positive (written $A\ge0$) if $\left\langle Ax,x \right\rangle \ge 0$  for all $x\in \mathcal{H}$, while it is said to be strictly positive (written $A>0$) if $A$ is positive and invertible. If $A$ and $B$ are self adjoint, we write $B\ge A$ in case $B-A\ge0$.

The Gelfand map $f\left( t \right)\mapsto f\left( A \right)$ is an isometrical $*$--isomorphism between the ${{C}^{*}}$--algebra $C\left( {\text{sp}}\left( A \right) \right)$ of continuous functions on the spectrum ${\text{sp}}\left( A \right)$ of a self  adjoint operator $A$ and the ${{C}^{*}}$--algebra generated by $A$ and the identity operator ${{\mathbf{1}}_{\mathcal{H}}}$. If $f,g\in C\left( {\text{sp}}\left( A \right) \right)$, then $f\left( t \right)\ge g\left( t \right)$ ($t\in {\text{sp}}\left( A \right)$) implies that $f\left( A \right)\ge g\left( A \right)$. This is called the functional calculus for the operator $A$.

 A real valued continuous function $f$ defined on the interval $J$ is said to be operator convex if $f\left( \left( 1-v \right)A+vB \right)\le \left( 1-v \right)f\left( A \right)+vf\left( B \right)$ for every $0<v<1$ and for every pair of bounded self  adjoint operators $A$ and $B$ whose spectra are both in $J$. One of the most important examples is the power function $t\mapsto {{t}^{p}}$ for $1\le p\le 2$. 

The Hermite--Hadamard inequality, named after Charles Hermite and Jacques Hadamard, states that if a function $f:J\to \mathbb{R}$ is convex, then the following chain of inequalities hold:
\begin{equation}\label{14}
f\left( \frac{a+b}{2} \right)\le \frac{1}{b-a}\int_{a}^{b}{f\left( t \right)dt}\le \frac{f\left( a \right)+f\left( b \right)}{2},\text{ }\left( a,b\in J, ~ a<b \right).
\end{equation}
Since (see, e.g. \cite[Lemma 2.1]{3})
\[\frac{1}{b-a}\int_{a}^{b}{f\left( x \right)dx}=\int_{0}^{1}{f\left( \left( 1-t \right)a+tb \right)dt}=\int_{0}^{1}{f\left( \left( 1-t \right)b+ta \right)dt},\]
we can rewrite \eqref{14} in the following form
\begin{equation}\label{her_t_form}
f\left( \frac{a+b}{2} \right)\le \int_{0}^{1}{f\left( \left( 1-t \right)a+tb \right)dt}\le \frac{f\left( a \right)+f\left( b \right)}{2}.
\end{equation}

The Hermite--Hadamard inequality plays an essential role in research on inequalities and has quite a sizeable technical literature; as one can see in \cite{6, 7, 8, 9, 1, 5}.

Obtaining operator inequalities corresponding to certain scalar inequalities have been an active research area in operator theory. Dragomir \cite{4} gave an operator version of Hermite--Hadamard inequality and proved that
\begin{equation}\label{18}
f\left( \frac{A+B}{2} \right)\le \int_{0}^{1}{f\left( \left( 1-t \right)A+tB \right)dt}\le \frac{f\left( A \right)+f\left( B \right)}{2},
\end{equation}
whenever $f:J\to \mathbb{R}$ is an operator convex and $A,B$ are two self adjoint operators with spectra  in $J$.\\
We emphasize here that the assumption {\it{operator convexity}} is essential to obtain \eqref{18}. For
example, if 
$$A=\left( \begin{matrix}
2 & 1  \\
1 & 1  \\
\end{matrix} \right), \quad B=\left( \begin{matrix}
1 & 0  \\
0 & 0  \\
\end{matrix} \right)\quad{\text{and}}\quad f\left( t \right)={{t}^{3}},$$ then simple computations show that 
$$f\left( \frac{A+B}{2} \right)=\left( \begin{matrix}
{17}/{4}\; & {7}/{4}\;  \\
{7}/{4}\; & {3}/{4}\;  \\
\end{matrix} \right),\quad \frac{f\left( A \right)+f\left( B \right)}{2}=\left( \begin{matrix}
7 & 4  \\
4 & {5}/{2}\;  \\
\end{matrix} \right)$$ and $$\int_{0}^{1}{f\left( \left( 1-t \right)A+tB \right)dt}=\left( \begin{matrix}
{31}/{6}\; & {5}/{2}\;  \\
{5}/{2}\; & {4}/{3}\;  \\
\end{matrix} \right).$$ It is easily seen that
\[f\left( \frac{A+B}{2} \right)\nleq \int_{0}^{1}{f\left( \left( 1-t \right)A+tB \right)dt}\nleq \frac{f\left( A \right)+f\left( B \right)}{2}.\]
So, even though $f(t)=t^3$ is convex (not operator convex), \eqref{18} does not hold; showing that operator convexity cannot be dropped.

It is then natural to ask about which conditions one should have so that the inequalities in \eqref{18} are valid for any convex function.

In \cite{sab_subadditive}, it is shown that convex functions satisfy \eqref{18} if some empty intersection conditions are imposed on the spectra of $A,B$. In this article, we present several forms of \eqref{18} using the  Mond--Pe\v cari\'c method for convex functions. For example, we show that for appropriate constants $\alpha, \beta,$
\begin{equation}\label{first_ineq_intro}
\int_{0}^{1}{f\left( \left( 1-t \right)A+tB \right)dt}\le \beta \mathbf{1}_\mathcal{H} +\alpha \left( \frac{g\left( A \right)+g\left( B \right)}{2} \right),
\end{equation}
 when $m{{\mathbf{1}}_{\mathcal{H}}}\le A,B\le M{{\mathbf{1}}_{\mathcal{H}}}$ and $f,g$ are certain functions. Then several converses and variants of \eqref{first_ineq_intro} are presented. See Theorem \ref{21} and the results that follow for the details.

 In the end, we present other forms using properties of inner product; without appealing to the  Mond--Pe\v cari\'c method.  Our results generalize some known inequalities presented in \cite{4, 1}.\\
 In our proofs, we will frequently use the basic inequality \cite[Theorem 1.2]{2}
 \begin{equation}\label{inner_conv_intro}
 f\left(\left<Ax,x\right>\right)\leq \left<f(A)x,x\right>
 \end{equation}
valid for  the convex function $f:J\to\mathbb{R}$, the self adjoint operator $A$ with spectrum in $J$ and the unit vector $x\in \mathcal{H}.$
 \section{Main Results}
We present our main results in to sections; where the  Mond--Pe\v cari\'c method is discussed first.
  Throughout this section, we use the following two standard notations for the function $f:[m,M]\to\mathbb{R},$
\[{{a}_{f}}=\frac{f\left( M \right)-f\left( m \right)}{M-m}\text{  }\!\!\And\!\!\text{  }{{b}_{f}}=\frac{Mf\left( m \right)-mf\left( M \right)}{M-m}.\]
 
 \subsection{Hermite--Hadamard inequalities using the Mond--Pe\v cari\'c method}

Our first convex (not operator convex) version of \eqref{18} reads as follows.
\begin{theorem}\label{21}
Let $A,B\in \mathcal{B}\left( \mathcal{H} \right)$ be two self adjoint operators satisfying $m{{\mathbf{1}}_{\mathcal{H}}}\le A,B\le M{{\mathbf{1}}_{\mathcal{H}}}$ and let $f,g:\left[ m,M \right]\to \mathbb{R}$ be two continuous functions. If $f$ and $g$ are both convex functions, then for a given $\alpha\ge 0,$
\begin{equation}\label{7}
\int_{0}^{1}{f\left( \left( 1-t \right)A+tB \right)dt}\le \beta \mathbf{1}_\mathcal{H} +\alpha \left( \frac{g\left( A \right)+g\left( B \right)}{2} \right),
\end{equation}	
where $\beta=\underset{m\le x\le M}{\mathop{\max }}\,\left\{ {{a}_{f}}x+{{b}_{f}}-\alpha g\left( x \right) \right\}.$
\end{theorem}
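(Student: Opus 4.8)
The plan is to combine the secant-line estimate coming from the convexity of $f$ with the functional calculus and the monotonicity of the operator-valued integral. The crucial scalar fact is that, since $f$ is convex on $[m,M]$, its graph lies below the chord joining the endpoints; that is, $f(x)\le a_f x+b_f$ for every $x\in[m,M]$, the right-hand side being precisely the affine function through $(m,f(m))$ and $(M,f(M))$ (one checks directly that $a_f m+b_f=f(m)$ and $a_f M+b_f=f(M)$).

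First I would fix $t\in[0,1]$ and set $C_t=(1-t)A+tB$. Since $m\mathbf{1}_{\mathcal{H}}\le A,B\le M\mathbf{1}_{\mathcal{H}}$ and $1-t,t\ge 0$, the operator $C_t$ is self adjoint with spectrum contained in $[m,M]$. Applying the functional calculus to the scalar inequality $f(x)\le a_f x+b_f$ then yields
$$f\left((1-t)A+tB\right)\le a_f\left((1-t)A+tB\right)+b_f\mathbf{1}_{\mathcal{H}}.$$

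Next I would integrate this operator inequality over $t\in[0,1]$. The L\"owner order is preserved under integration, since for each unit vector $x$ the map $X\mapsto\langle Xx,x\rangle$ is linear and positivity-preserving, so that $X_t\le Y_t$ for all $t$ forces $\int_0^1 X_t\,dt\le\int_0^1 Y_t\,dt$ by the monotonicity of the scalar Riemann integral. Using $\int_0^1\left((1-t)A+tB\right)dt=\frac{A+B}{2}$, this gives
$$\int_0^1 f\left((1-t)A+tB\right)dt\le a_f\frac{A+B}{2}+b_f\mathbf{1}_{\mathcal{H}}=\frac{\left(a_f A+b_f\mathbf{1}_{\mathcal{H}}\right)+\left(a_f B+b_f\mathbf{1}_{\mathcal{H}}\right)}{2}.$$

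Finally, the definition of $\beta$ as the maximum of $a_f x+b_f-\alpha g(x)$ over $[m,M]$ furnishes the scalar inequality $a_f x+b_f\le\beta+\alpha g(x)$ for all $x\in[m,M]$. Promoting this to $A$ and to $B$ through the functional calculus and averaging bounds the last expression above by $\beta\mathbf{1}_{\mathcal{H}}+\alpha\frac{g(A)+g(B)}{2}$, which chains with the previous display to give \eqref{7}. I expect the only genuine subtlety to be the justification that the inequality may be integrated while preserving the L\"owner order, which is disposed of by testing against an arbitrary unit vector as above; everything else is the functional calculus. The essential point, and the reason operator convexity is not needed, is that $f$ enters only through its scalar chord bound, evaluated at $A$, $B$ and at the combinations $C_t$ separately, never through an operator Jensen inequality. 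The hypotheses $\alpha\ge 0$ and the convexity of $g$ are not actually used in this chain, but they keep the right-hand side of \eqref{7} a faithful analogue of the classical Hermite--Hadamard upper bound.
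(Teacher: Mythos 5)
Your proof is correct, but it diverges from the paper's at the decisive step. Both arguments start the same way: the chord bound $f(x)\le a_f x+b_f$ on $[m,M]$, functional calculus for $(1-t)A+tB$, and integration yield $\int_0^1 f\left((1-t)A+tB\right)dt\le a_f\frac{A+B}{2}+b_f\mathbf{1}_\mathcal{H}$. From there the paper follows the Mond--Pe\v{c}ari\'c template: it fixes a unit vector $x$, subtracts $\alpha\left\langle \frac{g(A)+g(B)}{2}x,x\right\rangle$, and invokes the operator Jensen inequality \eqref{inner_conv_intro} together with midpoint convexity of $g$ to replace $\left\langle g(A)x,x\right\rangle$ and $\left\langle g(B)x,x\right\rangle$ by $g\left(\left\langle\frac{A+B}{2}x,x\right\rangle\right)$, after which the scalar maximum defining $\beta$ applies; this is precisely where convexity of $g$ and $\alpha\ge 0$ are consumed (the factor $-\alpha$ must not flip the Jensen inequality). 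You instead read the definition of $\beta$ as the scalar inequality $a_f x+b_f\le\beta+\alpha g(x)$ on $[m,M]$, promote it by functional calculus to $A$ and to $B$ separately, and average --- no inner products, no Jensen inequality. Your route is shorter and proves strictly more: as you observe, it requires neither the convexity of $g$ nor $\alpha\ge 0$, only continuity of $g$ (so that $\beta$ is finite and attained and the functional calculus applies); so the paper's hypotheses on $g$ and $\alpha$ are sufficient but not necessary for \eqref{7}. What the paper's heavier argument buys is not this theorem but the method: the inner-product manipulation it sets up is the engine reused in its subsequent norm and reverse-type results, where testing against unit vectors is unavoidable.
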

\begin{proof}
It follows from the convexity of $f:\left[ m,M \right]\to \mathbb{R}$ that 
\begin{equation}\label{1}
f\left( x \right)\le {{a}_{f}}x+{{b}_{f}}
\end{equation}
for any $m\le x\le M$. Since $m{{\mathbf{1}}_{\mathcal{H}}}\le A,B\le M{{\mathbf{1}}_{\mathcal{H}}}$, then $m{{\mathbf{1}}_{\mathcal{H}}}\le \left( 1-t \right)A+tB\le M{{\mathbf{1}}_{\mathcal{H}}}$.
Applying functional calculus for the operator $T=\left( 1-t \right)A+tB$ in \eqref{1} implies
\[f\left( \left( 1-t \right)A+tB \right)\le {{a}_{f}}\left( \left( 1-t \right)A+tB \right)+{{b}_{f}}\mathbf{1}_\mathcal{H}.\]
Integrating the inequality over $t\in \left[ 0,1 \right]$, we get
\[\int_{0}^{1}{f\left( \left( 1-t \right)A+tB \right)dt}\le {{a}_{f}}\left( \frac{A+B}{2} \right)+{{b}_{f}}\mathbf{1}_\mathcal{H}.\]
Now, let $x\in \mathcal{H}$ be a unit vector. One can write
\begin{align}
& \left\langle \left(\int_{0}^{1}{f\left( \left( 1-t \right)A+tB \right)dt}\right)x,x \right\rangle -\alpha \left\langle \left( \frac{g\left( A \right)+g\left( B \right)}{2} \right)x,x \right\rangle \nonumber \\ 
& \le {{a}_{f}}\left\langle \left( \frac{A+B}{2} \right)x,x \right\rangle +{{b}_{f}}-\alpha \left\langle \left( \frac{g\left( A \right)+g\left( B \right)}{2} \right)x,x \right\rangle  \nonumber\\ 
& ={{a}_{f}}\left\langle \left( \frac{A+B}{2} \right)x,x \right\rangle +{{b}_{f}}-\alpha \left( \frac{\left\langle g\left( A \right)x,x \right\rangle +\left\langle g\left( B \right)x,x \right\rangle }{2} \right) \nonumber\\ 
& \le {{a}_{f}}\left\langle \left( \frac{A+B}{2} \right)x,x \right\rangle +{{b}_{f}}-\alpha \left( \frac{g\left( \left\langle Ax,x \right\rangle  \right)+g\left( \left\langle Bx,x \right\rangle  \right)}{2} \right) \label{10}\\ 
& \le {{a}_{f}}\left\langle \left( \frac{A+B}{2} \right)x,x \right\rangle +{{b}_{f}}-\alpha g\left( \frac{\left\langle Ax,x \right\rangle +\left\langle Bx,x \right\rangle }{2} \right) \label{11}\\ 
& ={{a}_{f}}\left\langle \left( \frac{A+B}{2} \right)x,x \right\rangle +{{b}_{f}}-\alpha g\left( \left\langle \left(\frac{A+B}{2}\right)x,x \right\rangle  \right) \nonumber\\ 
& \le \underset{m\le x\le M}{\mathop{\max }}\,\left\{ {{a}_{f}}x+{{b}_{f}}-\alpha g\left( x \right) \right\}\nonumber
\end{align}
where in \eqref{10} we used \eqref{inner_conv_intro}, and \eqref{11} follows directly from  convexity of $g$.\\
Consequently,
\[\left\langle \left( \int_{0}^{1}{f\left( \left( 1-t \right)A+tB \right)dt} \right)x,x \right\rangle \le \beta +\alpha \left\langle \left( \frac{g\left( A \right)+g\left( B \right)}{2} \right)x,x \right\rangle \]
for any unit vector $x\in \mathcal{H}$. This completes the proof of inequality \eqref{7}. 
\end{proof}

Now we present some applications of Theorem \ref{21}. 
\begin{corollary}
Let $A,B\in \mathcal{B}\left( \mathcal{H} \right)$ be two self adjoint operators satisfying $m{{\mathbf{1}}_{\mathcal{H}}}\le A,B\le M{{\mathbf{1}}_{\mathcal{H}}}$ and let $f,g:\left[ m,M \right]\to \mathbb{R}$ be two continuous functions. If $f$ and $g>0$ are convex, then
\begin{equation}\label{22}
\int_{0}^{1}{f\left( \left( 1-t \right)A+tB \right)dt}\le \alpha \left( \frac{g\left( A \right)+g\left( B \right)}{2} \right),
\end{equation}
where $\alpha =\underset{m\le x\le M}{\mathop{\max }}\,\left\{ \frac{{{a}_{f}}x+{{b}_{f}}}{g\left( x \right)} \right\}$.

Further,
\[\int_{0}^{1}{f\left( \left( 1-t \right)A+tB \right)dt}\le \beta {{\mathbf{1}}_{\mathcal{H}}}+\frac{g\left( A \right)+g\left( B \right)}{2},\]
where $\beta =\underset{m\le x\le M}{\mathop{\max }}\,\left\{ {{a}_{f}}x+{{b}_{f}}-g\left( x \right) \right\}$
\end{corollary}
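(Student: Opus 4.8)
The plan is to obtain both displayed inequalities as direct specializations of Theorem \ref{21}, the only freedom being the choice of the parameter $\alpha$ and the resulting value of $\beta$. Throughout, the strict positivity hypothesis $g>0$ is precisely what lets me divide by $g(x)$ and keep track of signs, while the continuity of $g$ on the compact interval $[m,M]$ guarantees that the maxima defining $\alpha$ and $\beta$ are attained and finite.

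For the first inequality \eqref{22}, I would choose $\alpha=\max_{m\le x\le M}\frac{a_f x+b_f}{g(x)}$. With this choice, every $x\in[m,M]$ satisfies $\alpha\ge \frac{a_f x+b_f}{g(x)}$, and multiplying through by $g(x)>0$ gives $\alpha g(x)\ge a_f x+b_f$, that is, $a_f x+b_f-\alpha g(x)\le 0$ for all such $x$. Taking the maximum over $x$ then shows that the constant $\beta=\max_{m\le x\le M}\{a_f x+b_f-\alpha g(x)\}$ appearing in Theorem \ref{21} is nonpositive, so $\beta\,\mathbf{1}_{\mathcal{H}}\le 0$. Feeding $\alpha$ into \eqref{7} and discarding the nonpositive summand $\beta\,\mathbf{1}_{\mathcal{H}}$ yields exactly \eqref{22}.

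The second inequality is even more immediate: it is the statement of Theorem \ref{21} in the special case $\alpha=1$, for which the defining formula for $\beta$ collapses to $\beta=\max_{m\le x\le M}\{a_f x+b_f-g(x)\}$, and \eqref{7} becomes the asserted bound. The one point that I expect to require care, and which is the only genuine obstacle, is that Theorem \ref{21} is stated under the standing assumption $\alpha\ge 0$; so in the first part I must verify that the proposed $\alpha=\max_{m\le x\le M}\frac{a_f x+b_f}{g(x)}$ is indeed nonnegative before invoking \eqref{7}. Since $a_f x+b_f$ is the chord of $f$ joining $(m,f(m))$ to $(M,f(M))$ and $g>0$, the sign of $\alpha$ is controlled by the boundary ratios $f(m)/g(m)$ and $f(M)/g(M)$, and this sign bookkeeping is the only nonformal ingredient; everything else is a direct reading off of Theorem \ref{21}.
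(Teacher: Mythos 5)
Your derivation is exactly the paper's own proof: for \eqref{22} choose $\alpha=\max_{m\le x\le M}\left\{\frac{a_fx+b_f}{g(x)}\right\}$, observe that then $a_fx+b_f-\alpha g(x)\le 0$ on $[m,M]$ so the $\beta$ of Theorem \ref{21} is nonpositive, and drop the term $\beta\mathbf{1}_{\mathcal{H}}$ from \eqref{7}; for the second inequality take $\alpha=1$, which is unproblematic since $1\ge 0$. Up to this point you match the paper step for step.

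However, the obstacle you flag for \eqref{22} --- that Theorem \ref{21} is stated only for $\alpha\ge 0$ --- is a genuine one, and the resolution you sketch does not exist. Since $a_fm+b_f=f(m)$, $a_fM+b_f=f(M)$, and the chord is affine, one has $\alpha\ge 0$ precisely when $\max\{f(m),f(M)\}\ge 0$; but the corollary assumes only that $f$ is convex, so $f$ may be negative at both endpoints (e.g. $f\equiv -1$, $g\equiv 1$ gives $\alpha=-1$), and then Theorem \ref{21} cannot be invoked at all. (The paper's own proof silently ignores this point, so it carries the same defect; the restriction $\alpha\ge0$ is really used in its proof, in the two steps where Jensen-type inequalities are multiplied by $-\alpha$.) The correct repair is not sign bookkeeping but a more elementary route that bypasses the Mond--Pe\v cari\'c step entirely: by the definition of $\alpha$ and $g>0$, $a_fx+b_f\le \alpha g(x)$ for every $x\in[m,M]$, so functional calculus applied separately to $A$ and to $B$ gives $a_fA+b_f\mathbf{1}_{\mathcal{H}}\le \alpha g(A)$ and $a_fB+b_f\mathbf{1}_{\mathcal{H}}\le \alpha g(B)$; averaging these and combining with the chord bound $\int_{0}^{1}f\left((1-t)A+tB\right)dt\le a_f\left(\frac{A+B}{2}\right)+b_f\mathbf{1}_{\mathcal{H}}$ (established at the start of the proof of Theorem \ref{21}) yields \eqref{22} for \emph{any} real $\alpha$, without even using convexity of $g$. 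With that observation your choice of $\alpha$ goes through unconditionally, and the corollary is saved.
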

\begin{proof}
Notice that when $\alpha =\underset{m\le x\le M}{\mathop{\max }}\,\left\{ \frac{{{a}_{f}}x+{{b}_{f}}}{g\left( x \right)} \right\}$, then $a_fx+b_f-\alpha g(x)\leq 0.$   Therefore, from Theorem \ref{21}, $\beta\leq 0$ and \eqref{7} implies \eqref{22}. The other inequality follows similarly from Theorem \ref{21}.
\end{proof}
\begin{remark}
Setting $f=g>0$ the inequality \eqref{22} implies
\begin{equation}\label{20}
\int_{0}^{1}{f\left( \left( 1-t \right)A+tB \right)dt}\le \alpha \left( \frac{f\left( A \right)+f\left( B \right)}{2} \right)
\end{equation}
where $\alpha =\underset{m\le x\le M}{\mathop{\max }}\,\left\{ \frac{{{a}_{f}}x+{{b}_{f}}}{f\left( x \right)} \right\}$.
We remark that a similar result as in \eqref{20} was shown in \cite[Theorem 3.9]{1}. Therefore, Theorem \ref{21} can be considered as an extension of \cite[Theorem 3.9]{1}. 
\end{remark}
Notice that Theorem \ref{21} and its consequences above present operator order inequalities. In the next result, we obtain operator norm inequalities. Here, $|A|=(A^*A)^{1/2},$ where $A^*$ is the adjoint operator of $A$.
\begin{proposition}
Let $A,B\in \mathcal{B}\left( \mathcal{H} \right)$ be two self adjoint operators satisfying $m{{\mathbf{1}}_{\mathcal{H}}}\le \left| A \right|,\left| B \right|\le M{{\mathbf{1}}_{\mathcal{H}}}$ and let $f:\left[ m,M \right]\to \mathbb{R}$ be a nonnegative continuous increasing convex function. Then for a given $\alpha\ge 0,$
\[f\left( \left\| \frac{A+B}{2} \right\| \right)\le \left\| \int_{0}^{1}{f\left( \left( 1-t \right)\left| A \right|+t\left| B \right| \right)dt} \right\|\le \beta +\alpha \left\| \frac{f\left( \left| A \right| \right)+f\left( \left| B \right| \right)}{2} \right\|\]	
where $\beta =\underset{m\le x\le M}{\mathop{\max }}\,\left\{ {{a}_{f}}x+{{b}_{f}}-\alpha f\left( x \right) \right\}$.
\end{proposition}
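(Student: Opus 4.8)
The plan is to prove the two inequalities separately. Throughout, write $Y=\int_{0}^{1}f\left(\left(1-t\right)\left|A\right|+t\left|B\right|\right)dt$ and $C=\frac{f\left(\left|A\right|\right)+f\left(\left|B\right|\right)}{2}$; since $f\ge0$, both are positive operators, so $\|Y\|=\sup_{\|x\|=1}\langle Yx,x\rangle$ and $\langle Cx,x\rangle\le\|C\|$ for every unit vector $x$. Observe also that $\left|A\right|,\left|B\right|$ are positive self adjoint operators with spectra in $\left[m,M\right]$, so Theorem \ref{21} is available for the pair $\left|A\right|,\left|B\right|$.

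The right-hand inequality is the quicker one. I would apply Theorem \ref{21} with $g=f$ and with $\left|A\right|,\left|B\right|$ in place of $A,B$, obtaining the operator inequality $Y\le\beta\mathbf{1}_{\mathcal{H}}+\alpha C$ with exactly the stated $\beta$. Pairing this with a unit vector $x$ and using $\alpha\ge0$ together with $\langle Cx,x\rangle\le\|C\|$ gives $\langle Yx,x\rangle\le\beta+\alpha\|C\|$. Taking the supremum over unit vectors and recalling $\|Y\|=\sup_{\|x\|=1}\langle Yx,x\rangle$ (as $Y\ge0$) yields $\|Y\|\le\beta+\alpha\|C\|$, which is the asserted upper bound.

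For the left-hand inequality I would work pointwise. Fix a unit vector $x$. Applying the basic inequality \eqref{inner_conv_intro} to the convex function $f$ and the operator $\left(1-t\right)\left|A\right|+t\left|B\right|$ (whose spectrum lies in $\left[m,M\right]$) and then integrating in $t$ gives
\[
\langle Yx,x\rangle=\int_{0}^{1}\langle f\left(\left(1-t\right)\left|A\right|+t\left|B\right|\right)x,x\rangle\,dt\ge\int_{0}^{1}f\left(\left(1-t\right)\langle\left|A\right|x,x\rangle+t\langle\left|B\right|x,x\rangle\right)dt.
\]
Invoking the scalar Hermite--Hadamard inequality, namely the left estimate in \eqref{her_t_form} with $a=\langle\left|A\right|x,x\rangle$ and $b=\langle\left|B\right|x,x\rangle$, bounds the last integral below by $f\left(\langle\frac{\left|A\right|+\left|B\right|}{2}x,x\rangle\right)$. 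Writing $T=\frac{\left|A\right|+\left|B\right|}{2}$, which is positive with $m\mathbf{1}_{\mathcal{H}}\le T\le M\mathbf{1}_{\mathcal{H}}$, every such argument lies in $\left[m,M\right]$ and $\sup_{\|x\|=1}\langle Tx,x\rangle=\|T\|$; since $f$ is continuous and increasing, taking the supremum over unit vectors turns the pointwise estimate into $\|Y\|\ge f\left(\|T\|\right)$.

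It remains to replace $\|T\|$ by $\|\frac{A+B}{2}\|$. From $-\left|A\right|\le A\le\left|A\right|$ (and likewise for $B$) one gets $\left|\langle\frac{A+B}{2}x,x\rangle\right|\le\langle Tx,x\rangle$ for every unit $x$, and taking suprema gives $\|\frac{A+B}{2}\|\le\|T\|$; monotonicity of $f$ then upgrades $\|Y\|\ge f\left(\|T\|\right)$ to $\|Y\|\ge f\left(\|\frac{A+B}{2}\|\right)$, completing the chain. I expect the main obstacle to be precisely the left inequality: the interchange of $f$ with the supremum in passing from $f\left(\langle Tx,x\rangle\right)$ to $f\left(\|T\|\right)$, where continuity and monotonicity of $f$ are both essential, along with checking that every argument fed to $f$ remains in the interval $\left[m,M\right]$.
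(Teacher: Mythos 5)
Your proof is correct and follows essentially the same route as the paper: the upper bound comes from Theorem \ref{21} applied to $|A|,|B|$ with $g=f$, and the lower bound from \eqref{inner_conv_intro} combined with the scalar Hermite--Hadamard inequality \eqref{her_t_form}, followed by a supremum over unit vectors using that $f$ is increasing and continuous. The only cosmetic difference is that you compare $\left\| \frac{A+B}{2} \right\|$ with $\left\| \frac{|A|+|B|}{2} \right\|$ as a separate final step via $-|A|\le A\le |A|$, whereas the paper absorbs this comparison into its pointwise chain through the triangle inequality on $\left| \left\langle Ax,x \right\rangle + \left\langle Bx,x \right\rangle \right|$.
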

\begin{proof}
Recall that if $T\in \mathcal{B}\left( \mathcal{H} \right)$  is a self adjoint operator, then $\left\| T \right\|=\underset{\left\| x \right\|=1}{\mathop{\sup }}\,\left| \left\langle Tx,x \right\rangle  \right|$. Let $x\in \mathcal{H}$ be a unit vector. Then
\begin{equation}\label{12}
\begin{aligned}
 f\left( \left| \left\langle \left( \frac{A+B}{2} \right)x,x \right\rangle  \right| \right)&=f\left( \left| \frac{\left\langle Ax,x \right\rangle +\left\langle Bx,x \right\rangle }{2} \right| \right) \\ 
& \le f\left( \frac{\left| \left\langle Ax,x \right\rangle  \right|+\left| \left\langle Bx,x \right\rangle  \right|}{2} \right) \quad \text{(by the triangle inequality)}\\ 
& \le f\left( \frac{\left\langle \left| A \right|x,x \right\rangle +\left\langle \left| B \right|x,x \right\rangle }{2} \right) \quad(\text{by }\eqref{inner_conv_intro})\\ 
& \le \int_{0}^{1}{f\left( \left( 1-t \right)\left\langle \left| A \right|x,x \right\rangle +t\left\langle \left| B \right|x,x \right\rangle  \right)dt}\quad({\text{by }}\eqref{her_t_form}) \\ 
& = \int_{0}^{1}{f\left( \left\langle \left( \left( 1-t \right)\left| A \right|+t\left| B \right| \right)x,x \right\rangle \right)dt}\\
&\leq  \int_0^1 \left\langle f\left( \left( 1-t \right)\left| A \right|+t\left| B \right| \right)x,x \right\rangle
dt  \quad({\text{by }}\eqref{inner_conv_intro}) \\
& =   \left\langle \left(\int_{0}^{1}{f\left( \left( 1-t \right)\left| A \right|+t\left| B \right| \right)dt}\right)x,x \right\rangle  \\ 
& \le  \left\| \int_{0}^{1}{f\left( \left( 1-t \right)\left| A \right|+t\left| B \right| \right)dt} \right\|. 
\end{aligned}
\end{equation}
  Now, by taking supremum over $x\in \mathcal{H}$ with $\left\| x \right\|=1$ in \eqref{12}  and noting that $f$ is increasing, 
\[\begin{aligned}
 f\left( \left\| \frac{A+B}{2} \right\| \right)&\le \left\| \int_{0}^{1}{f\left( \left( 1-t \right)\left| A \right|+t\left| B \right| \right)dt} \right\| \\ 
& \le \left\| \beta {{1}_{H}}+\alpha \left( \frac{f\left( \left| A \right| \right)+f\left( \left| B \right| \right)}{2} \right) \right\| \\ 
& \le \beta +\alpha \left\| \frac{f\left( \left| A \right| \right)+f\left( \left| B \right| \right)}{2} \right\|  
\end{aligned}\]
thanks to \eqref{7}. This completes the proof.
\end{proof}

We end this section by giving the weighted generalization of operator Hermite--Hadamard inequality. For convenience, we use  $A{{\nabla }_{\lambda }B}$ to denote $\left( 1-\lambda  \right)A+\lambda B$. We then show that Theorem \ref{nabla_thm} is a generalization of \eqref{18}.
\begin{theorem}\label{nabla_thm}
Let $A,B\in \mathcal{B}\left( \mathcal{H} \right)$ be two self adjoint operators satisfying $m{{\mathbf{1}}_{\mathcal{H}}}\le A,B\le M{{\mathbf{1}}_{\mathcal{H}}}$ and let $f:\left[ m,M \right]\to \mathbb{R}$ be an operator convex function. Then for any $0\le \lambda \le 1$,
\[\begin{aligned}
 f\left( A{{\nabla }_{\lambda }}B \right)&\le \int_{0}^{1}{f\left( \left( A{{\nabla }_{\lambda }}B \right){{\nabla }_{v}}A \right){{\nabla }_{\lambda }}f\left( \left( A{{\nabla }_{\lambda }}B \right){{\nabla }_{v}}B \right)dv} \\ 
& \le f\left( A \right){{\nabla }_{\lambda }}f\left( B \right).  
\end{aligned}\]
\end{theorem}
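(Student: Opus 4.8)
The plan is to set $C=A\nabla_\lambda B=(1-\lambda)A+\lambda B$ and to read the middle quantity as a $\lambda$-weighted combination of two integrals, namely $(1-\lambda)\int_0^1 f\bigl((1-v)C+vA\bigr)\,dv+\lambda\int_0^1 f\bigl((1-v)C+vB\bigr)\,dv$, so that the whole statement reduces to applying the operator Hermite--Hadamard inequality \eqref{18} to the two pairs $(C,A)$ and $(C,B)$ together with operator convexity of $f$. Since $m\mathbf{1}_\mathcal{H}\le A,B\le M\mathbf{1}_\mathcal{H}$, the operator $C$ is a convex combination of $A$ and $B$ and hence also satisfies $m\mathbf{1}_\mathcal{H}\le C\le M\mathbf{1}_\mathcal{H}$; thus every operator appearing as an argument of $f$ has spectrum in $[m,M]$ and \eqref{18} is legitimately applicable.

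For the left-hand inequality I would argue pointwise in $v$. The key identity is that for each fixed $v\in[0,1]$ the $\lambda$-weighted average of the two arguments collapses to $C$, namely $(1-\lambda)\bigl((1-v)C+vA\bigr)+\lambda\bigl((1-v)C+vB\bigr)=(1-v)C+vC=C$. Operator convexity of $f$ applied to this combination gives $f(C)\le(1-\lambda)f\bigl((1-v)C+vA\bigr)+\lambda f\bigl((1-v)C+vB\bigr)$ for every $v$, and integrating over $v\in[0,1]$ yields $f(C)$ bounded by the middle quantity, which is exactly the left inequality.

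For the right-hand inequality I would apply the upper bound in \eqref{18} separately to each integral: to the pair $(C,A)$ to obtain $\int_0^1 f\bigl((1-v)C+vA\bigr)\,dv\le\tfrac12\bigl(f(C)+f(A)\bigr)$, and likewise to the pair $(C,B)$. Forming the $\lambda$-weighted combination produces the estimate $\tfrac12 f(C)+\tfrac12\bigl((1-\lambda)f(A)+\lambda f(B)\bigr)$ for the middle quantity. The remaining step is to absorb the stray $\tfrac12 f(C)$ term: operator convexity of $f$ gives $f(C)=f(A\nabla_\lambda B)\le(1-\lambda)f(A)+\lambda f(B)=f(A)\nabla_\lambda f(B)$, and substituting this into the previous bound collapses it to $f(A)\nabla_\lambda f(B)$, as required.

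I expect the main subtlety to be the right-hand inequality, where a naive use of \eqref{18} produces the ``wrong'' midpoint average $\tfrac{f(C)+f(A)}{2}$ rather than the desired $\lambda$-weighted endpoint value. The device that makes everything fit is invoking operator convexity a second time to dispose of the leftover $f(C)$; recognizing that this extra appeal to convexity, rather than a sharper integral estimate, is what closes the gap is the crux. All other steps---the reinterpretation of the middle term and the pointwise convexity argument for the left inequality---are routine once the defining identity $(1-v)C+vC=C$ is noticed.
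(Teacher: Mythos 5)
Your proof is correct, and your left-hand inequality is exactly the paper's: both rest on the collapsing identity $(1-\lambda)\bigl((1-v)C+vA\bigr)+\lambda\bigl((1-v)C+vB\bigr)=C$ (with $C=A\nabla_\lambda B$), apply operator convexity pointwise in $v$, and integrate. For the right-hand inequality your route differs from the paper's in a genuine, though equivalent, way. You integrate the two halves first and invoke the known operator Hermite--Hadamard inequality \eqref{18} for the pairs $(C,A)$ and $(C,B)$, which leaves the stray term $\tfrac{1}{2}f(C)$, and you then absorb it by a second use of operator convexity, $f(C)\le f(A)\nabla_\lambda f(B)$. The paper never invokes \eqref{18}: it stays pointwise in $v$, bounding $f\bigl((1-v)C+vA\bigr)\le f(C)\nabla_v f(A)$ and $f\bigl((1-v)C+vB\bigr)\le f(C)\nabla_v f(B)$ by operator convexity, performing the same absorption $f(C)\le f(A)\nabla_\lambda f(B)=:D$ inside the chain, and then noting that $\bigl(D\nabla_v f(A)\bigr)\nabla_\lambda\bigl(D\nabla_v f(B)\bigr)=D$ by the same collapsing identity; only at the end does it integrate over $v$. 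The two arguments are essentially interchangeable---the upper half of \eqref{18} is itself proved by exactly the pointwise bound the paper uses---but the paper's version is self-contained, which matters presentationally: since Theorem \ref{nabla_thm} is advertised as a generalization of \eqref{18} (recovered at $\lambda=1/2$), proving it without appealing to \eqref{18} is cleaner, whereas your proof, while logically sound (there is no circularity, as \eqref{18} is Dragomir's independently established result), derives the generalization by leaning on the special case's machinery.
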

\begin{proof}
Since for $0\le \lambda ,v\le 1$,
\[A{{\nabla }_{\lambda }}B=\left( \left( A{{\nabla }_{\lambda }}B \right){{\nabla }_{v}}A \right){{\nabla }_{\lambda }}\left( \left( A{{\nabla }_{\lambda }}B \right){{\nabla }_{v}}B \right)\]
holds, we infer from the operator convexity of $f$ that
\[\begin{aligned}
 f\left( A{{\nabla }_{\lambda }}B \right)&=f\left( \left( \left( A{{\nabla }_{\lambda }}B \right){{\nabla }_{v}}A \right){{\nabla }_{\lambda }}\left( \left( A{{\nabla }_{\lambda }}B \right){{\nabla }_{v}}B \right) \right) \\ 
& \le f\left( \left( A{{\nabla }_{\lambda }}B \right){{\nabla }_{v}}A \right){{\nabla }_{\lambda }}f\left( \left( A{{\nabla }_{\lambda }}B \right){{\nabla }_{v}}B \right) \\ 
&\leq \left\{f(A\nabla_{\lambda}B)\nabla_v f(A)\right\}\nabla_{\lambda}\left\{f(A\nabla_{\lambda}B)\nabla_v f(B)\right\}\\
&\leq \left\{\left(f(A)\nabla_{\lambda}f(B)\right)\nabla_v f(A)\right\}\nabla_{\lambda}\left\{\left(f(A)\nabla_{\lambda}f(B)\right)\nabla_{v}f(B)\right\}                 \\
& \le f\left( A \right){{\nabla }_{\lambda }}f\left( B \right).
\end{aligned}\]
Integrating the inequality over $v\in \left[ 0,1 \right]$, we get
\[\begin{aligned}
 f\left( A{{\nabla }_{\lambda }}B \right)&\le \int_{0}^{1}{f\left( \left( A{{\nabla }_{\lambda }}B \right){{\nabla }_{v}}A \right){{\nabla }_{\lambda }}f\left( \left( A{{\nabla }_{\lambda }}B \right){{\nabla }_{v}}B \right)dv} \\ 
& \le f\left( A \right){{\nabla }_{\lambda }}f\left( B \right)  
\end{aligned}\]
which is the statement of the theorem. 
\end{proof}
\begin{remark}
To show that Theorem \ref{nabla_thm} is a generalization of \eqref{18}, put $\lambda ={1}/{2}\;$. Thus
\begin{equation}\label{19}
\begin{aligned}
& f\left( \frac{A+B}{2} \right) \\ 
& \le \frac{1}{2}\left[ \int_{0}^{1}{f\left( \left( 1-v \right)\left( \frac{A+B}{2} \right)+vA \right)dv}+\int_{0}^{1}{f\left( \left( 1-v \right)\left( \frac{A+B}{2} \right)+vB \right)dv} \right] \\ 
& \le \frac{f\left( A \right)+f\left( B \right)}{2}.
\end{aligned}
\end{equation}
On making use of the change of variable $v=1-2t$ we have
\begin{equation}\label{16}
\frac{1}{2}\int_{0}^{1}{f\left( \left( 1-v \right)\left( \frac{A+B}{2} \right)+vA \right)dv}=\int_{0}^{\frac{1}{2}}{f\left( \left( 1-t \right)A+tB \right)dt}.
\end{equation}
and by the change of variable $v=2t-1$,
\begin{equation}\label{17}
\frac{1}{2}\int_{0}^{1}{f\left( \left( 1-v \right)\left( \frac{A+B}{2} \right)+vB \right)dv}=\int_{\frac{1}{2}}^{1}{f\left( \left( 1-t \right)A+tB \right)dt}.
\end{equation}
Relations \eqref{16} and \eqref{17}, gives
\begin{equation}\label{15}
\begin{aligned}
& \frac{1}{2}\left[ \int_{0}^{1}{f\left( \left( 1-v \right)\left( \frac{A+B}{2} \right)+vA \right)dv}+\int_{0}^{1}{f\left( \left( 1-v \right)\left( \frac{A+B}{2} \right)+vB \right)dv} \right] \\ 
& =\int_{0}^{1}{f\left( \left( 1-t \right)A+tB \right)dt} \\ 
\end{aligned}
\end{equation}
and the assertion follows by combining \eqref{19} and \eqref{15}.
\end{remark}

\subsection{Reverse Hermite--Hadamard inequalities using the Mond--Pe\v cari\'c method}
In the forthcoming theorem, we give additive, and multiplicative type reverses for the first and the second inequalities in \eqref{18}.
\begin{theorem}
	Let $A,B\in \mathcal{B}\left( \mathcal{H} \right)$ be two self adjoint operators satisfying $m{{\mathbf{1}}_{\mathcal{H}}}\le A,B\le M{{\mathbf{1}}_{\mathcal{H}}}$ and let $f,g:\left[ m,M \right]\to \mathbb{R}$ be two continuous functions. If $f$ is a convex function, then for a given $\alpha\ge 0$
	\begin{equation}\label{2}
	\int_{0}^{1}{f\left( \left( 1-t \right)A+tB \right)dt}\le \beta {{\mathbf{1}}_{\mathcal{H}}}+\alpha g\left( \frac{A+B}{2} \right),
	\end{equation}
	and
	\begin{equation}\label{3}
	\frac{f\left( A \right)+f\left( B \right)}{2}\le \beta {{\mathbf{1}}_{\mathcal{H}}}+\alpha \int_{0}^{1}{g\left( \left( 1-t \right)A+tB \right)dt},
	\end{equation}
	where $\beta =\underset{m\le x\le M}{\mathop{\max }}\,\left\{ {{a}_{f}}x+{{b}_{f}}-\alpha g\left( x \right) \right\}$.
\end{theorem}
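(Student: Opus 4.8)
The plan is to lift two elementary scalar facts to the operator level through the order-preserving functional calculus, the key observation being that \emph{no convexity is needed on $g$}: its entire contribution is absorbed into the scalar maximum defining $\beta$. The two scalar ingredients are, first, the secant bound coming from convexity of $f$, namely $f(x)\le a_f x+b_f$ for $m\le x\le M$, and second, the defining property of $\beta$, which reads $a_f x+b_f-\alpha g(x)\le \beta$ for all $x\in[m,M]$, equivalently $a_f x+b_f\le \beta+\alpha g(x)$. Since every operator appearing below has spectrum contained in $[m,M]$, each of these continuous scalar inequalities transfers verbatim into an operator inequality.

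To prove \eqref{2}, I would first apply functional calculus to $T=(1-t)A+tB$ (whose spectrum lies in $[m,M]$ because $m\mathbf{1}_\mathcal{H}\le A,B\le M\mathbf{1}_\mathcal{H}$) in the secant bound and then integrate over $t\in[0,1]$, obtaining $\int_0^1 f((1-t)A+tB)\,dt\le a_f\frac{A+B}{2}+b_f\mathbf{1}_\mathcal{H}$, exactly as in the opening steps of the proof of Theorem~\ref{21}. It then remains to dominate this right-hand side. Setting $C=\frac{A+B}{2}$, which again has spectrum in $[m,M]$, I would apply functional calculus to the continuous function $x\mapsto a_f x+b_f-\alpha g(x)\le\beta$ to get $a_f C+b_f\mathbf{1}_\mathcal{H}-\alpha g(C)\le\beta\mathbf{1}_\mathcal{H}$; rearranging and chaining with the integral bound yields \eqref{2}.

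For \eqref{3} the roles are rearranged. I would apply the secant bound to $A$ and to $B$ separately and average, giving $\frac{f(A)+f(B)}{2}\le a_f\frac{A+B}{2}+b_f\mathbf{1}_\mathcal{H}$. To replace the averaged linear term by the integral of $g$, I would instead apply the second scalar inequality $a_f x+b_f\le\beta+\alpha g(x)$ to $T=(1-t)A+tB$ via functional calculus and integrate over $t$, producing $a_f\frac{A+B}{2}+b_f\mathbf{1}_\mathcal{H}\le\beta\mathbf{1}_\mathcal{H}+\alpha\int_0^1 g((1-t)A+tB)\,dt$. Combining the two displays gives \eqref{3}.

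There is no serious obstacle here: the proof is a clean pair of functional-calculus arguments and does not even require the inner-product inequality \eqref{inner_conv_intro} used in Theorem~\ref{21}, precisely because $g$ is no longer assumed convex. The only point demanding care is to keep track of \emph{where} $g$ is evaluated. In \eqref{2} it sits on the single averaged operator $\frac{A+B}{2}$, so one collapses the integral to $\frac{A+B}{2}$ first and then applies functional calculus to that one operator; in \eqref{3} it sits under the integral, so one must push the scalar estimate through the integrand $(1-t)A+tB$ before integrating. Matching this bookkeeping to the two forms of the scalar inequality is what makes each of the two reverses come out in the asserted shape.
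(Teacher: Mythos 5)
Your proof is correct and follows essentially the same route as the paper: both inequalities come from lifting the secant bound $f(x)\le a_f x+b_f$ and the defining property $a_f x+b_f-\alpha g(x)\le\beta$ through functional calculus (applied to $\frac{A+B}{2}$ for \eqref{2} and to $(1-t)A+tB$ for \eqref{3}) and integrating. The only cosmetic difference is in \eqref{3}, where the paper combines the $t$-weighted secant bounds $(1-t)f(A)+tf(B)\le a_f\left((1-t)A+tB\right)+b_f\mathbf{1}_\mathcal{H}$ with the $\beta$-inequality pointwise in $t$ before integrating, whereas you integrate the $\beta$-inequality separately and then chain; the ingredients and conclusion are identical.
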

\begin{proof}
From \eqref{1} and by applying functional calculus for the operator $T=\left( 1-t \right)A+tB$, we have 
	\[f\left( \left( 1-t \right)A+tB \right)\le {{a}_{f}}\left( \left( 1-t \right)A+tB \right)+{{b}_{f}}{{\mathbf{1}}_{\mathcal{H}}}.\] 
	Integrating both sides of the above inequality over $t\in\left[ 0,1 \right]$, we have 
	\[\int_{0}^{1}{f\left( \left( 1-t \right)A+tB \right)dt}\le {{a}_{f}}\left( \frac{A+B}{2} \right)+{{b}_{f}}{{\mathbf{1}}_{\mathcal{H}}}.\] 
	Therefore,
	\[\begin{aligned}
	& \int_{0}^{1}{f\left( \left( 1-t \right)A+tB \right)dt}-\alpha g\left( \frac{A+B}{2} \right) \\ 
	& \le {{a}_{f}}\left( \frac{A+B}{2} \right)+{{b}_{f}}{{\mathbf{1}}_{\mathcal{H}}}-\alpha g\left( \frac{A+B}{2} \right) \\ 
	& \le \underset{m\le x\le M}{\mathop{\max }}\,\left\{ {{a}_{f}}x+{{b}_{f}}-\alpha g\left( x \right) \right\}\mathbf{1}_\mathcal{H}.
	\end{aligned}\] 
	Consequently,
	\[\int_{0}^{1}{f\left( \left( 1-t \right)A+tB \right)dt}\le \beta {{\mathbf{1}}_{\mathcal{H}}}+\alpha g\left( \frac{A+B}{2} \right)\] 
which proves \eqref{2}. To prove \eqref{3}, notice that \eqref{1} implies, for $0\leq t\leq 1,$
\begin{equation}\label{5}
(1-t)f\left( A \right)\le {{a}_{f}}(1-t)A+{{b}_{f}}(1-t){{\mathbf{1}}_{\mathcal{H}}},
\end{equation}
\begin{equation}\label{6}
tf\left( B \right)\le {{a}_{f}}tB+{{b}_{f}}t{{\mathbf{1}}_{\mathcal{H}}}.
\end{equation}
From \eqref{5} and \eqref{6} we infer that
\[(1-t)f\left( A \right)+tf\left( B \right)\le {{a}_{f}}\left( (1-t)A+tB \right)+{{b}_{f}}{{\mathbf{1}}_{\mathcal{H}}}.\]
Therefore
\[\begin{aligned}
& (1-t)f\left( A \right)+tf\left( B \right)-\alpha g\left( \left( 1-t \right)A+tB \right) \\ 
& \le {{a}_{f}}\left( (1-t)A+tB \right)+{{b}_{f}}{{\mathbf{1}}_{\mathcal{H}}}-\alpha g\left( \left( 1-t \right)A+tB \right) \\ 
& \le \underset{m\le x\le M}{\mathop{\max }}\,\left\{ {{a}_{f}}x+{{b}_{f}}-\alpha g\left( x \right) \right\}\mathbf{1}_\mathcal{H}.
\end{aligned}\]
Thus,
\begin{equation}\label{4}
(1-t)f\left( A \right)+tf\left( B \right) \le \beta {{\mathbf{1}}_{\mathcal{H}}}+\alpha g\left( \left( 1-t \right)A+tB \right).
\end{equation}
Integrating both sides of \eqref{4} over $\left[ 0,1 \right]$ we get  \eqref{3} and the proof is complete.
\end{proof}

\subsection{Operator Hermite--Hadamard inequality using the gradient inequality}
In this subsection, we present versions of the operator Hermite--Hadamard inequality using the gradient inequality
\begin{equation}\label{grad_ineq}
f'\left( s \right)\left( t-s \right)+f\left( s \right)\le f\left( t \right),
\end{equation} 
where $f:J\to\mathbb{R}$ is convex differentiable and $s,t\in J.$
\begin{theorem}\label{8}
	Let $A,B\in\mathcal{B}(\mathcal{H})$ be self adjoint operators with spectra in the interval $J$ and let $f:J\to\mathbb{R}$ be a differentiable convex function. Then
	
	\begin{equation}\label{24}
	f\left( \frac{A+B}{2} \right)\le \int_{0}^{1}{f\left( \left( 1-v \right)A+vB \right)dv}+\delta \mathbf{1}_\mathcal{H},
	\end{equation}
	where
	\[\delta =\underset{\left\| x \right\|=1}{\mathop{\underset{x\in \mathcal{H}}{\mathop{\sup }}\,}}\,\left\{ \left\langle f'\left( \frac{A+B}{2} \right)\left(\frac{A+B}{2}\right)x,x \right\rangle -\left\langle f'\left( \frac{A+B}{2} \right)x,x \right\rangle \left\langle \left( \frac{A+B}{2} \right)x,x \right\rangle  \right\}.\] 
\end{theorem}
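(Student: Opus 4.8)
The plan is to combine two pointwise (inner-product) estimates, each controlled by the gradient inequality \eqref{grad_ineq} together with the basic inequality \eqref{inner_conv_intro}, and then to pass to operators via the order characterization of quadratic forms. Throughout, write $C=\tfrac{A+B}{2}$, fix a unit vector $x\in\mathcal{H}$, and set $s=\langle Cx,x\rangle$, noting that $s\in J$ since $\mathrm{sp}(C)\subseteq J$.

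The heart of the argument, and the step I expect to be the main obstacle, is to produce the correction term $\delta$ out of the failure of operator convexity. The idea is to apply the gradient inequality \eqref{grad_ineq} with the roles reversed from the usual Mond--Pe\v cari\'c use: regard the scalar $t=s=\langle Cx,x\rangle$ as fixed and the base point as the variable. Thus $f(s)\ge f(u)+f'(u)(s-u)$ for every $u\in J$, and reading this as an inequality between functions of $u$ lets me apply the functional calculus to the operator $C$, giving $f(s)\mathbf{1}_\mathcal{H}\ge f(C)+f'(C)(s\,\mathbf{1}_\mathcal{H}-C)$. Taking the inner product against $x$ and rearranging yields
\[
\langle f(C)x,x\rangle\le f(\langle Cx,x\rangle)+\Big(\langle f'(C)Cx,x\rangle-\langle f'(C)x,x\rangle\langle Cx,x\rangle\Big)\le f(\langle Cx,x\rangle)+\delta,
\]
where the bracketed quantity is exactly the expression whose supremum defines $\delta$ (here I use that $f'(C)$ and $C$ commute, being functions of the same operator). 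Recognizing that this reversed application of \eqref{grad_ineq} is precisely what converts the defect between $\langle f(C)x,x\rangle$ and $f(\langle Cx,x\rangle)$ into $\delta$ is the crux; the rest is comparatively routine.

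It then remains to bound $f(\langle Cx,x\rangle)$ by the integral term. For this I would apply the left-hand (midpoint) inequality of the scalar Hermite--Hadamard inequality \eqref{her_t_form} to the scalars $a=\langle Ax,x\rangle$ and $b=\langle Bx,x\rangle$, using that $\tfrac{a+b}{2}=s$ and that $(1-v)a+vb=\langle((1-v)A+vB)x,x\rangle$; this gives $f(s)\le\int_0^1 f(\langle((1-v)A+vB)x,x\rangle)\,dv$. Applying \eqref{inner_conv_intro} inside the integral and then interchanging the integral with the inner product produces $f(\langle Cx,x\rangle)\le\langle(\int_0^1 f((1-v)A+vB)\,dv)x,x\rangle$. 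Chaining this with the estimate of the previous paragraph gives $\langle f(C)x,x\rangle\le\langle(\int_0^1 f((1-v)A+vB)\,dv)x,x\rangle+\delta$ for every unit vector $x$. Since both sides are quadratic forms of self adjoint operators, this pointwise inequality (valid for all $\|x\|=1$) is equivalent to the operator inequality \eqref{24}, which completes the argument.
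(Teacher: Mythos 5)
Your proof is correct, and its crux coincides with the paper's: both obtain $\delta$ by pushing the gradient inequality \eqref{grad_ineq} through the functional calculus at $C=\frac{A+B}{2}$ in the base-point variable, so that taking quadratic forms produces exactly the defect $\left\langle f'(C)Cx,x \right\rangle-\left\langle f'(C)x,x \right\rangle\left\langle Cx,x \right\rangle$. The two arguments part ways afterwards. The paper never leaves the operator level: from the scalar inequality $t\left\langle f'(C)x,x \right\rangle-\left\langle f'(C)Cx,x \right\rangle+\left\langle f(C)x,x \right\rangle\le f(t)$, valid for all $t\in J$, it applies the functional calculus a \emph{second} time with $t=(1-v)A+vB$, integrates over $v\in[0,1]$, and takes the quadratic form again; this produces $\int_0^1 f\left(\left(1-v\right)A+vB\right)dv$ directly, with no appeal to \eqref{her_t_form} or \eqref{inner_conv_intro}. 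You instead specialize $t$ to the scalar $\left\langle Cx,x\right\rangle$, which gives the intermediate statement $\left\langle f(C)x,x\right\rangle\le f\left(\left\langle Cx,x\right\rangle\right)+\delta$ (a reverse of \eqref{inner_conv_intro} with additive defect $\delta$), and then bound $f\left(\left\langle Cx,x\right\rangle\right)$ by $\left\langle \left(\int_0^1 f\left(\left(1-v\right)A+vB\right)dv\right)x,x\right\rangle$ via the scalar Hermite--Hadamard inequality combined with \eqref{inner_conv_intro}. Both routes are valid under the same hypotheses (each needs $f'$ continuous for the functional calculus, which holds because the derivative of a differentiable convex function is monotone and has the Darboux property, hence is continuous; and each needs the spectra of $C$ and $(1-v)A+vB$ to lie in $J$, which holds since $J$ is an interval). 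The paper's version is more economical, using one tool twice; yours buys a cleaner conceptual factorization, exhibiting $\delta$ explicitly as a bound on the Jensen gap $\left\langle f(C)x,x\right\rangle-f\left(\left\langle Cx,x\right\rangle\right)$ and delegating the rest to the scalar midpoint inequality.
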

\begin{proof}
	 Since $f$ is convex differentiable, \eqref{grad_ineq} applies. 
	By applying functional calculus for the operator $s=\frac{A+B}{2}$ we get
	\[tf'\left( \frac{A+B}{2} \right)-f'\left( \frac{A+B}{2} \right)\left(\frac{A+B}{2}\right)+f\left( \frac{A+B}{2} \right)\le f\left( t \right){{\mathbf{1}}_{\mathcal{H}}}.\] 
	So, for any unit vector $x\in \mathcal{H}$,
	\[t\left\langle f'\left( \frac{A+B}{2} \right)x,x \right\rangle -\left\langle f'\left( \frac{A+B}{2} \right)\left(\frac{A+B}{2}\right)x,x \right\rangle +\left\langle f\left( \frac{A+B}{2} \right)x,x \right\rangle \le f\left( t \right).\] 
	Again, by applying functional calculus for the operator $t=\left( 1-v \right)A+vB$ we get
	\[\begin{aligned}
	& \left\langle f'\left( \frac{A+B}{2} \right)x,x \right\rangle \left( (1-v )A+vB \right)-\left\langle f'\left( \frac{A+B}{2} \right)\left(\frac{A+B}{2}\right)x,x \right\rangle {{\mathbf{1}}_{\mathcal{H}}}+\left\langle f\left( \frac{A+B}{2} \right)x,x \right\rangle {{\mathbf{1}}_{\mathcal{H}}} \\ 
	& \le f\left( \left( (1-v)A+vB \right) \right). \\ 
	\end{aligned}\] 
	Integrating both sides over $t\in \left[ 0,1 \right]$ implies
	\[\begin{aligned}
	& \left\langle f'\left( \frac{A+B}{2} \right)x,x \right\rangle \left( \frac{A+B}{2} \right)-\left\langle f'\left( \frac{A+B}{2} \right)\left(\frac{A+B}{2}\right)x,x \right\rangle {{\mathbf{1}}_{\mathcal{H}}}+\left\langle f\left( \frac{A+B}{2} \right)x,x \right\rangle {{\mathbf{1}}_{\mathcal{H}}} \\ 
	& \le \int_{0}^{1}{ f\left( \left( 1-v \right)A+vB \right)dv}. \\ 
	\end{aligned}\]
	Whence, for any unit vector $x\in \mathcal{H}$,
	\[\begin{aligned}
	& \left\langle f'\left( \frac{A+B}{2} \right)x,x \right\rangle \left\langle \left( \frac{A+B}{2} \right)x,x \right\rangle -\left\langle f'\left( \frac{A+B}{2} \right)\left(\frac{A+B}{2}\right)x,x \right\rangle +\left\langle f\left( \frac{A+B}{2} \right)x,x \right\rangle  \\ 
	& \le \left\langle{ \left( \int_{0}^{1}f\left( ( 1-v )A+vB \right)dv \right)}x,x \right\rangle . \\ 
	\end{aligned}\]
	Thus,
	\[\left\langle f\left( \frac{A+B}{2} \right)x,x \right\rangle \le \left\langle \left(\int_{0}^{1} f\left( ( 1-v )A+vB \right)dv\right)x,x \right\rangle +\delta \] 
	where
	\[\delta =\underset{\left\| x \right\|=1}{\mathop{\underset{x\in \mathcal{H}}{\mathop{\sup }}\,}}\,\left\{ \left\langle f'\left( \frac{A+B}{2} \right)\left(\frac{A+B}{2}\right)x,x \right\rangle -\left\langle f'\left( \frac{A+B}{2} \right)x,x \right\rangle \left\langle \left( \frac{A+B}{2} \right)x,x \right\rangle  \right\}.\] 
	Therefore,
	
	\[f\left( \frac{A+B}{2} \right)\le \int_{0}^{1}{f\left( \left( 1-v \right)A+vB \right)dv}+\delta \mathbf{1}_\mathcal{H} ,\]
	which completes the proof. 
\end{proof}
Our last result in this direction is as follows.

\begin{theorem}\label{9}
	Let $A,B\in\mathcal{B}(\mathcal{H})$ be self adjoint operators with spectra in the interval $J$ and let $f:J\to\mathbb{R}$ be a differentiable convex function. Then
	\begin{equation}\label{25}
	\int_{0}^{1}{f\left( \left( 1-v \right)A+vB \right)dv}\le \frac{f\left( A \right)+f\left( B \right)}{2}+\xi \mathbf{1}_\mathcal{H},
	\end{equation}
	where
\[\begin{aligned}
& \xi =\underset{\left\| x \right\|=1}{\mathop{\underset{x\in \mathcal{H}}{\mathop{\sup }}\,}}\,\left\{ \int_{0}^{1}{\left\langle f'\left( \left( 1-v \right)A+vB \right)\left( \left( 1-v \right)A+vB \right)x,x \right\rangle dv} \right. \\ 
&\quad \quad\quad\quad \left. -\int_{0}^{1}{\left\langle f'\left( \left( 1-v \right)A+vB \right)x,x \right\rangle \left\langle \left( \left( 1-v \right)A+vB \right)x,x \right\rangle dv} \right\}.  
\end{aligned}\]
\end{theorem}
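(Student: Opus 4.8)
The plan is to follow the template of Theorem~\ref{8}, reversing the roles of the fixed and the variable point in the gradient inequality \eqref{grad_ineq} and, most importantly, tying the convex weights to the integration variable. Throughout I write $T_v=(1-v)A+vB$, which is self adjoint with spectrum in $J$ for every $0\le v\le 1$. The guiding principle, exactly as in Theorem~\ref{8}, is to turn operator coefficients into scalars by pairing with a unit vector, so that functional calculus may be reapplied to substitute further operators.

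First I would fix $v\in[0,1]$ and apply functional calculus in $s$ to \eqref{grad_ineq}, anchoring it at the operator $s=T_v$: for each fixed scalar $t\in J$ the inequality $f'(s)(t-s)+f(s)\le f(t)$ holds for all $s\in J$, so the functional calculus for $T_v$ gives
\[ t\,f'\left(T_v\right)-f'\left(T_v\right)T_v+f\left(T_v\right)\le f(t)\,\mathbf{1}_\mathcal{H}. \]
Pairing with an arbitrary unit vector $x\in\mathcal{H}$ then yields the scalar affine bound $c_1(v)\,t+c_0(v)\le f(t)$, valid for all $t\in J$, where $c_1(v)=\left\langle f'\left(T_v\right)x,x\right\rangle$ and $c_0(v)=\left\langle f\left(T_v\right)x,x\right\rangle-\left\langle f'\left(T_v\right)T_v x,x\right\rangle$.

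With $x$ fixed the coefficients $c_1(v),c_0(v)$ are genuine scalars, so I would reapply functional calculus to $c_1(v)t+c_0(v)\le f(t)$ with the two operator substitutions $t=A$ and $t=B$, obtaining $c_1(v)A+c_0(v)\mathbf{1}_\mathcal{H}\le f(A)$ and $c_1(v)B+c_0(v)\mathbf{1}_\mathcal{H}\le f(B)$. Pairing each with $x$ and forming the convex combination with weights $1-v$ and $v$---the same $v$ that defines $T_v$---recombines the left-hand scalars into $\left\langle T_v x,x\right\rangle$, giving
\[ c_1(v)\left\langle T_v x,x\right\rangle+c_0(v)\le(1-v)\left\langle f(A)x,x\right\rangle+v\left\langle f(B)x,x\right\rangle. \]
Integrating over $v\in[0,1]$ turns the right-hand side into $\tfrac12\big(\left\langle f(A)x,x\right\rangle+\left\langle f(B)x,x\right\rangle\big)$, while substituting back $c_0(v),c_1(v)$ and using $\int_0^1\left\langle f(T_v)x,x\right\rangle dv=\left\langle\big(\int_0^1 f(T_v)\,dv\big)x,x\right\rangle$ isolates $\left\langle\big(\int_0^1 f(T_v)\,dv\big)x,x\right\rangle$ on the left and leaves precisely the argument of the supremum defining $\xi$ on the right. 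Since that argument is at most $\xi$ for the given $x$, I would conclude
\[ \left\langle\Big(\int_0^1 f(T_v)\,dv\Big)x,x\right\rangle\le\left\langle\frac{f(A)+f(B)}{2}\,x,x\right\rangle+\xi \]
for every unit vector $x$, which is exactly \eqref{25}.

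The one delicate point---and the step I expect to be the crux---is the legitimacy of this two-stage functional calculus across the non-commutativity of $A$ and $B$: the second substitution $t\mapsto A,B$ is permissible only after pairing with $x$ has frozen $c_1(v),c_0(v)$ into scalars, and this ordering must be respected throughout. The accompanying insight is the choice of weights $1-v$ and $v$ coupled to the integration variable; the symmetric weighting $\tfrac12,\tfrac12$ implicit in Theorem~\ref{8} would produce $\left\langle\frac{A+B}{2}x,x\right\rangle$ in place of $\left\langle T_v x,x\right\rangle$ and would not reproduce the stated $\xi$. Once the weights are chosen correctly, the remaining manipulations---linearity of the integral and the identity for $\int_0^1\left\langle f(T_v)x,x\right\rangle dv$---are routine.
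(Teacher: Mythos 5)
Your proposal is correct and follows essentially the same route as the paper's own proof: apply the gradient inequality \eqref{grad_ineq} via functional calculus at $s=(1-v)A+vB$, pair with a unit vector to freeze the coefficients into scalars, substitute $t=A$ and $t=B$ with the $v$-dependent weights $1-v$ and $v$, and integrate over $v$ to recognize $\xi$. The only cosmetic difference is that the paper forms the weighted combination at the operator level (its inequalities \eqref{01} and \eqref{02}) before pairing with $x$, whereas you pair first and combine scalars, which is equivalent.
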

\begin{proof}
	By applying functional calculus for the operator $T=\left( 1-v \right)A+vB$ in \eqref{grad_ineq}, we have
	\[tf'\left( \left( 1-v \right)A+vB \right)-f'\left( \left( 1-v \right)A+vB \right)\left( \left( 1-v \right)A+vB \right)+f\left( \left( 1-v \right)A+vB \right)\le f\left( t \right)\mathbf{1}_\mathcal{H}.\]
	Hence for any unit vector $x\in \mathcal{H}$,
	\[\begin{aligned}
	& t\left\langle f'\left( \left( 1-v \right)A+vB \right)x,x \right\rangle -\left\langle f'\left( \left( 1-v \right)A+vB \right)\left( \left( 1-v \right)A+vB \right)x,x \right\rangle  \\ 
	&\quad\quad\quad\quad\quad +\left\langle f\left( \left( 1-v \right)A+vB \right)x,x \right\rangle \le f\left( t \right).
	\end{aligned}\]
	Again, it follows from the functional calculus for $t=A$ and $t=B$, respectively
	\begin{equation}\label{01}
	\begin{aligned}
	& \left( 1-v \right)\left\langle f'\left( \left( 1-v \right)A+vB \right)x,x \right\rangle A-\left( 1-v \right)\left\langle f'\left( \left( 1-v \right)A+vB \right)\left( \left( 1-v \right)A+vB \right)x,x \right\rangle\mathbf{1}_\mathcal{H}  \\ 
	&\quad +\left( 1-v \right)\left\langle f\left( \left( 1-v \right)A+vB \right)x,x \right\rangle\mathbf{1}_\mathcal{H} \le \left( 1-v \right)f\left( A \right),
	\end{aligned}
	\end{equation}
	and
	\begin{equation}\label{02}
	\begin{aligned}
	& v\left\langle f'\left( \left( 1-v \right)A+vB \right)x,x \right\rangle B-v\left\langle f'\left( \left( 1-v \right)A+vB \right)\left( \left( 1-v \right)A+vB \right)x,x \right\rangle\mathbf{1}_\mathcal{H}  \\ 
	&\quad +v\left\langle f\left( \left( 1-v \right)A+vB \right)x,x \right\rangle \mathbf{1}_\mathcal{H}\le vf\left( B \right).
	\end{aligned}
	\end{equation}
	By combining \eqref{01} and \eqref{02} we obtain
	\[\begin{aligned}
	& \left\langle f'\left( \left( 1-v \right)A+vB \right)x,x \right\rangle \left( \left( 1-v \right)A+vB \right)-\left\langle f'\left( \left( 1-v \right)A+vB \right)\left( \left( 1-v \right)A+vB \right)x,x \right\rangle\mathbf{1}_\mathcal{H}  \\ 
	&\quad +\left\langle f\left( \left( 1-v \right)A+vB \right)x,x \right\rangle \mathbf{1}_\mathcal{H}\le \left( 1-v \right)f\left( A \right)+vf\left( B \right).
	\end{aligned}\]
	This implies
	\[\begin{aligned}
	& \left\langle f'\left( \left( 1-v \right)A+vB \right)x,x \right\rangle \left\langle \left( \left( 1-v \right)A+vB \right)x,x \right\rangle  \\ 
	&\quad -\left\langle f'\left( \left( 1-v \right)A+vB \right)\left( \left( 1-v \right)A+vB \right)x,x \right\rangle +\left\langle f\left( \left( 1-v \right)A+vB \right)x,x \right\rangle  \\ 
	& \le \left\langle \left( \left( 1-v \right)f\left( A \right)+vf\left( B \right) \right)x,x \right\rangle  \\ 
	\end{aligned}\]
	for any unit vector $x\in \mathcal{H}$. Integrating both sides over $v\in \left[ 0,1 \right]$ we get
	\[\left\langle \left(\int_{0}^{1}f\left( \left( 1-v \right)A+vB \right)dv\right)x,x \right\rangle \le \left\langle \left( \frac{f\left( A \right)+f\left( B \right)}{2} \right)x,x \right\rangle +\xi \]
	where
\[\begin{aligned}
& \xi =\underset{\left\| x \right\|=1}{\mathop{\underset{x\in \mathcal{H}}{\mathop{\sup }}\,}}\,\left\{ \int_{0}^{1}{\left\langle f'\left( \left( 1-v \right)A+vB \right)\left( \left( 1-v \right)A+vB \right)x,x \right\rangle dv} \right. \\ 
&\quad \quad\quad\quad \left. -\int_{0}^{1}{\left\langle f'\left( \left( 1-v \right)A+vB \right)x,x \right\rangle \left\langle \left( \left( 1-v \right)A+vB \right)x,x \right\rangle dv} \right\}. \\ 
\end{aligned}\]
	Consequently,
	\[\int_{0}^{1}{f\left( \left( 1-v \right)A+vB \right)dv}\le \frac{f\left( A \right)+f\left( B \right)}{2}+\xi \mathbf{1}_\mathcal{H},\]
	as desired.
\end{proof}
\begin{remark}
Notice that in both Theorems \ref{8} and \ref{9}, a quantity of the form 
$$\sup_{\|x\|=1}\left\{\left<Af'(A)x,x\right>-\left<Ax,x\right>\left<f'(A)x,x\right>\right\}$$ has been found as a refining term, for some self adjoint operator $A$. We show here that this quantity is always non-negative, when $f$ is such a convex function.\\
Applying functional calculus for $s=A$ in \eqref{grad_ineq}, we obtain
$$f(A)-f(t)\mathbf{1}_\mathcal{H}\leq Af'(A)-tf'(A),$$ which implies
$$\left<f(A)x,x\right>-f(t)\leq \left<Af'(A)x,x\right>-t\left<f'(A)x,x\right>, x\in\mathcal{H}, \|x\|=1.$$ Now replacing $t$ by $\left<Ax,x\right>$ and noting \eqref{inner_conv_intro}, we obtain
$$ \left<Af'(A)x,x\right>-\left<Ax,x\right>\left<f'(A)x,x\right>\geq \left<f(A)x,x\right>-f\left(\left<Ax,x\right>\right)\geq 0,$$ as desired.
\end{remark}

\vskip 0.5 true cm

\vskip 0.3 true cm

{\tiny (H. R. Moradi) Department of Mathematics, Payame Noor University (PNU), P.O.Box, 19395-4697, Tehran, Iran.
	
	\textit{E-mail address:} hrmoradi@mshdiau.ac.ir}

\vskip 0.3 true cm

{\tiny (M. Sababheh) Department of Basic Sciences, Princess Sumaya University for Technology, Amman 11941,
	Jordan. 
	
	\textit{E-mail address:} sababheh@yahoo.com; sababheh@psut.edu.jo}

{\tiny (S. Furuichi) Department of Information Science, College of Humanities and Sciences, Nihon University, 3-25-40, Sakurajyousui,	Setagaya-ku, Tokyo, 156-8550, Japan. }

{\tiny \textit{E-mail address:} furuichi@chs.nihon-u.ac.jp }
\end{document}